\let\oldmarginpar\marginpar 
\renewcommand\marginpar[1]{\-\oldmarginpar{\raggedright\small\sf #1}}
\title[Exceptional collections on  fake
  projective planes]{Exceptional collections on $2$-adically uniformised fake
  projective planes}
\author{Najmuddin Fakhruddin}
\address{School of Mathematics, Tata Institute of Fundamental Research, 
Homi Bhabha Road, Mumbai 400005, India}
\email{naf@math.tifr.res.in}
\newcommand{\nc}{\newcommand}
\nc{\rnc}{\renewcommand}
\nc{\bs}{\backslash}
\nc{\te}{\otimes}
\nc{\lf}{\lfloor} 
\nc{\rf}{\rfloor}
\nc{\lc}{\lceil}  
\nc{\rc}{\rceil}
\nc{\lr}{\longrightarrow}
\nc{\sr}{\stackrel}
\nc{\dar}{\dashrightarrow}
\nc{\thra}{\twoheadrightarrow}
\nc{\mc}{\mathscr} 
\nc{\me}{\mathcal}
\nc{\mb}{\mathbb}
\nc{\mf}{\mathbf}
\nc{\mr}{\mathrm}
\nc{\mg}{\mathfrak}
\nc{\bP}{\mathbb{P}}
\rnc{\P}{\mathbb{P}}
\nc{\Q}{\mathbb{Q}}
\nc{\Z}{\mathbb{Z}}
\nc{\C}{\mathbb{C}}
\nc{\R}{\mathbb{R}}
\nc{\A}{\mathbb{A}}
\nc{\V}{\mathbb{V}}
\nc{\W}{\mathbb{W}}
\nc{\N}{\mathbb{N}}
\nc{\D}{\mathbb{D}}
\nc{\G}{\mathbb{G}}
\nc{\F}{\mathbb{F}}
\nc{\qb}{\overline{\mathbb{Q}}}
\nc{\del}{\partial}
\nc{\wt}{\widetilde}
\nc{\wh}{\widehat}
\nc{\ov}{\overline}
\nc{\aff}{{\A}^1}
\nc{\naive}{\!\sim_n}
\nc{\Spec}{\mr{Spec}}
\nc{\omx}{\omega_X}
\nc{\ep}{\epsilon}
\nc{\ve}{\varepsilon}
\nc{\vt}{\vartheta}
\nc{\ovl}{\ov{\lambda}}
\nc{\vl}{\mb{V}_{\ovl}}
\nc{\dl}{\mb{D}_{\ovl}}
\nc{\mnb}{\ov{\mr{M}}_{0,n}}
\nc{\mn}{\mr{M}_{0,n}}
\nc{\mel}{\ov{\mr{M}}_{1,1}}
\nc{\mfb}{\ov{\mr{M}}_{0,4}}
\nc{\mof}{\mr{M}_{0,4}}
\nc{\mgnb}{\ov{\mr{M}}_{g,n}}
\nc{\mgn}{\ov{\mr{M}}_{g,n}}
\nc{\omc}{\ov{\mr{M}}}
\rnc{\sl}{\shoveleft}
\nc{\res}{\operatorname{Res}}
\nc{\pic}{\operatorname{Pic}}
\nc{\spec}{\operatorname{Spec}}
\nc{\im}{\operatorname{Im}}
\nc{\gal}{\operatorname{Gal}}
\nc{\fr}{\operatorname{Fr}}
\nc{\ed}{\operatorname{ed}}
\nc{\rank}{\operatorname{rank}}
\nc{\h}{\operatorname{H}}
\nc{\ch}{\operatorname{char}}
\nc{\sw}{\operatorname{sw}}
\nc{\rsw}{\operatorname{rsw}}
\nc{\Mor}{\operatorname{Mor}}
\nc{\Per}{\operatorname{Per}}
\nc{\prep}{\operatorname{Prep}}
\nc{\End}{\operatorname{End}}
\nc{\Orb}{\operatorname{Orb}}
\newtheorem{thm}{Theorem}[section]
\newtheorem*{thmn}{Theorem}
\newtheorem{prop}[thm]{Proposition}
\newtheorem{conj}[thm]{Conjecture}
\newtheorem{lem}[thm]{Lemma}
\theoremstyle{definition}
\newtheorem{rem}[thm]{Remark}
\numberwithin{equation}{section}
\begin{document}

\begin{abstract}
  We show that there exist exceptional collections of length $3$
  consisting of line bundles on the three fake projective planes that
  have a $2$-adic uniformisation with torsion free covering group. We
  also compute the Hochschild cohomology of the right orthogonal of
  the subcategory of the bounded derived category of coherent sheaves
  generated by these exceptional collections.
\end{abstract}

\maketitle

\section{Introduction}

Mumford showed in \cite{mumford-fake} that discrete cocompact
torsion-free subgroups $\Gamma \subset \mr{PGL}_3(\Q_2)$ which act
transitively on the vertices of the Bruhat--Tits building of
$\mr{PGL}_{3,\Q_2}$ give rise to so called fake projective
planes---surfaces of general type with the same Betti numbers as
$\P^2$---and he gave one example of such a subgroup. All such
subgroups were classified by Cartwright, Mantero, Steger and Zappa
\cite{cmsz2}; there are two others and they give rise to two other
fake projective planes \cite{ishida-kato}.

More recently, all fake projective planes over $\C$ were classified by
Prasad and Yeung \cite{prasad-yeung} and Cartwright and Steger
\cite{cartwright-steger}. However, from the algebro-geometric point of
view these surfaces are still not well understood; for example, it is
still not known whether Bloch's conjecture on zero cycles on surfaces
with $p_g =0$ holds for any of these surfaces. Another question about
surfaces of general type with $p_g=0$ that has arisen very recently is
the existence of exceptional collections of maximal possible length in
their bounded derived categories of coherent sheaves. The first such
example was found by B\"ohning, von Bothmer and Sosna
\cite{bohning-bothmer-sosna} and subsequently several other examples
have been found. In the article \cite{GKMS} of Galkin, Katzarkov,
Mellit and Shinder, the authors conjecture that such exceptional
collections exist on all fake projective planes admitting a cube root
of the canonical bundle. The following is a consequence of the main result of this
paper, Theorem \ref{thm:exceptional}.

\begin{thmn}
  Let $M$ be a fake projective plane having a $2$-adic uniformisation
  with a torsion free covering group. There is an exceptional
  collection of length $3$ in $\me{D}^b(M)$ consisting of line
  bundles.
\end{thmn}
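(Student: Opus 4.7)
The plan is to exhibit three line bundles $L_0, L_1, L_2 \in \pic(M)$ explicitly and verify the semiorthogonality conditions directly. Since $q(M) = p_g(M) = 0$, the diagonal conditions $\mr{End}(L_i) = \C$ and $\mr{Ext}^k(L_i, L_i) = 0$ for $k \geq 1$ hold automatically for any line bundle; hence $(L_0, L_1, L_2)$ is exceptional iff $\h^k(M, L_j \otimes L_i^{-1}) = 0$ for all $k$ and all $i > j$. This reduces the problem to cohomological vanishing for three specific line bundles.

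To produce the candidates I would use the $2$-adic uniformisation $M = \Gamma \bs \Omega$, with $\Omega$ the Drinfeld symmetric domain for $\mr{PGL}_3(\Q_2)$. The finite abelianisation $\Gamma^{\mr{ab}}$ (computed in \cite{cmsz2}) gives rise to a supply of torsion line bundles on $M$; combined with a cube root $H$ of $K_M$ in $\pic(M)$ (whose existence under the present hypotheses should follow from the classification), this leads to the natural ansatz $L_i = -iH + \tau_i$, where $\tau_0 = 0$ and $\tau_1, \tau_2$ are torsion characters to be chosen. Riemann--Roch on $M$ gives $\chi(aH) = 1 + \frac{a(a-3)}{2}$ (with $H^2 = 1$ since $K_M^2 = 9$), and the requirement $\chi(L_j - L_i) = 0$ essentially forces the numerical part of the ansatz. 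The three semiorthogonality conditions then translate to $\h^*(M, H - \tau_1) = \h^*(M, 2H - \tau_2) = \h^*(M, H + \tau_2 - \tau_1) = 0$, and via Serre duality (using $K_M = 3H$) each reduces further to the non-existence of global sections for line bundles in the numerical classes $H$ and $2H$, twisted by various torsion characters.

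The main obstacle is to establish these $\h^0$ vanishings. I expect this to be the heart of the argument, and it is where the $2$-adic uniformisation enters substantively: global sections on $M$ correspond to $\Gamma$-equivariant sections on the Mumford formal model covering $\Omega$, while a twist by a character $\tau$ simply modifies the equivariant structure. The combinatorial description of the $\Gamma$-action on the Bruhat--Tits building of $\mr{PGL}_3(\Q_2)$ from \cite{cmsz2} and \cite{ishida-kato} should then reduce the equivariant section count to an explicit finite computation on the quotient building $\Gamma \bs \mc{B}$. The delicate point is to arrange a single choice of torsion twists that simultaneously kills the invariants in each of the six relevant cases; the fact that $\Gamma^{\mr{ab}}$ is typically small makes this non-obvious and is presumably the reason the theorem is restricted to the torsion-free, $2$-adically uniformised case.
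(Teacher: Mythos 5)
Your reduction of exceptionality to cohomological vanishing is the same first step as in the paper: since $p_g(M)=q(M)=0$, one only needs $\h^k(M, L_i\otimes L_j^{-1})=0$ for $j>i$ and all $k$, and by Riemann--Roch and Serre duality this comes down to $\h^0$-vanishing for ample classes of degree $1$ and $2$ (numerically $H$ and $2H$) twisted by torsion. But the heart of the theorem is precisely those $\h^0$-vanishings, and here your proposal has a genuine gap. You propose to compute $\Gamma$-equivariant sections on the formal model and claim this ``should reduce to an explicit finite computation on the quotient building.'' This is not substantiated and does not work as stated: the spaces of sections of the relevant bundles on the Drinfeld space $\widehat{\Omega}^2$ (or its formal model) are not finite combinatorial data attached to vertices of $\Gamma\backslash\mc{B}$, and no mechanism is given for turning an invariant-section count into a finite computation. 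Moreover, there is a structural obstruction you do not address: by the paper's Lemma \ref{lem:nonexist} the cube root $H$ of $\omega_M$ is \emph{not} defined over $\Q_2$ or any unramified extension --- it exists only over a ramified cubic extension $k$ --- so the degree-$1$ and degree-$2$ bundles do not live on the regular model $\mc{M}$ over $\Z_2$, and one cannot directly specialise them (or their equivariant structures) to the special fibre; your ansatz silently assumes $H\in\pic(M)$ over the base field, which is false in the arithmetically relevant sense.

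The paper's actual route supplies exactly the missing ideas. It constructs the cube roots explicitly by lifting $\Gamma$ to $\mr{SL}_3(k)$ for an explicit ramified cubic $k$, and then proves the key vanishing for an ample $L\in P$ of degree $2$ in two steps: if $h^0(L)\ge 2$, a multiplication map into $\h^0$ of a degree-$4$ bundle (computable by Kodaira vanishing and Riemann--Roch) gives a dimension contradiction using that $L$ has a non-isomorphic Galois conjugate; if $h^0(L)=1$, one sums the unique divisors in the three Galois-conjugate linear systems to obtain a Galois-invariant divisor in $|\omega_{M}^{\otimes 2}\otimes T|$ defined over $\Q_2$, which \emph{does} extend to the regular model and can be specialised to the special fibre $M_0$; total ramification of $K/F$ forces the specialisation to have multiplicity divisible by $3$ along its components, and a separate combinatorial lemma (Lemma \ref{lem:three}), a case analysis of degree-$8$ curves through all $\F_2$-points of $\P^2_{\F_2}$ on the normalisation $B$ of $M_0$, rules such a divisor out. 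None of this Galois-conjugation, descent-and-specialisation, or combinatorial analysis appears in your outline, and without some replacement for it the vanishing --- and hence the theorem --- is not proved.
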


We note that $3$ is the smallest possible length of an exceptional
collection in $\me{D}^b(X)$, for $X$ any smooth projective variety, so
that the right orthogonal $\me{A}$ to the exceptional collection is a
quasiphantom subcategory of $\me{D}^b(X)$, i.e., such that the
Hochschild homology $HH_{\bullet}(\me{A}) = 0$. Using a spectral
sequence recently constructed by Kuznetsov \cite{kuznetsov-qph} which
has a very simple form in our setting, we are also able to compute the
Hochschild cohomology of $\me{A}$. While we do not prove Bloch's
conjecture for these surfaces, we formulate a general conjecture,
Conjecture \ref{conj:van}, on the $K_0$ of quasiphantom subcategories
which we hope might lead to a proof.

\smallskip

Our method of proof depends crucially on the fact that the fake
projective planes with $2$-adic uniformisations have natural regular
proper models over $\spec(\Z_2)$. The special fibre in all cases is an
explicit irreducible rational surface whose normalisation is the
blowup of $\P^2_{\F_2}$ along its rational points. The main technical
result is the computation of all the cohomology groups of a natural
class of line bundles on these fake projective planes, Proposition
\ref{prop:cohom}. The particular case of this relevant to the
construction of exceptional collections is proved by using a Galois
theoretic argument and specialisation, eventually reducing this to an
explicit computation on $\P^2_{\F_2}$.

\smallskip 

After the first version of this paper was put on arXiv, we learned
from L.~Katzarkov that the authors of \cite{GKMS} had proved their
conjecture for $6$ fake projective planes over $\C$, distinct from the
ones we have considered, and by different methods. This is included in
v2 of \cite{GKMS}.

\section{Line bundles on fake projective planes}

\subsection{}
We denote by $\mc{X}$ the formal scheme over $\spec(\Z_2)$
corresponding to $\mr{PGL}_3(\Q_2)$ constructed by Mustafin
\cite{mustafin} and Kurihara \cite{kurihara}; the reader may also
consult \cite{mumford-fake} for an exposition in the case we use. The
irreducible components of the special fibre of $\mc{X}$ are in
bijection with the vertices of the Bruhat--Tits builiding of
$\mr{PGL}_{3,\Q_2}$ and each of these components is isomorphic to the
surface $B$ obtained by blowing up $\P^2_{\F_2}$ at all its
$\F_2$-rational points. There is a faithful action of
$\mr{PGL}_3(\Q_2)$ on $\mc{X}$ which is transitive on the irreducible
components of the special fibre and the stabilizer of each component
is isomorphic to $\mr{PGL}_3(\Z_2)$. This action restricts to a
faithful action on $\widehat{\Omega}^2$, the two dimensional Drinfeld
upper half space over $\Q_2$, which is the generic fibre (as a rigid
analytic space over $\Q_2$) of $\mc{X}$. Moreover,
$\widehat{\Omega}^2$ is an admissible open subset (in the sense of
rigid analytic geometry) of $\P^2_{\Q_2}$ and the $\mr{PGL}_3(\Q_2)$
action on it is compatible with this inclusion and the natural action
of $\mr{PGL}_3(\Q_2)$ on $\P^2_{\Q_2}$.

If $\Gamma$ is a discrete torsion-free cocompact subgroup of
$\mr{PGL}_3(\Q_2)$, then one may form the quotient formal scheme
$\mc{X}/\Gamma$. The dualising sheaf $\omega_{\mc{X}}$ descends to a
line bundle on $\mc{X}/\Gamma$ which is ample on the special fibre,
hence by Grothendieck's existence theorem, $\mc{X}/\Gamma$ is the
formal completion of a unique regular projective scheme over
$\spec(\Z_2)$. If $\Gamma$ acts transitively on the irreducible
components of $\mc{X}$, then Mumford shows that the generic fibre $M$
of $\mc{M}$, the projective scheme over $\spec(\Z_2)$ corresponding to
$\mc{X}/\Gamma$, is a fake projective plane. The special fibre $M_0$
of $\mc{M}$ is an irreducible surface over $\F_2$ whose normalisation
is isomorphic to $B$.

\begin{lem} \label{lem:nonexist} Let $F$ be any finite unramified
  extension of $\Q_2$.  For all line bundles $L$ on $M_F$ we have $9
  \mid c_1(L)^2$. In particular, $\omega_M$ does not have a cube root
  defined over $F$.
\end{lem}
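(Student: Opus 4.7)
The plan is to relate line bundles on $M_F$ to classes on the normalisation $B$ of the special fibre of $\mc{M}$, and to identify the pullback of $\omega_M$ there with a primitive vector of self-intersection $9$.

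First, since $\mc{M}_{\mc{O}_F} := \mc{M} \times_{\spec(\Z_2)} \spec(\mc{O}_F)$ is regular and proper over $\spec(\mc{O}_F)$, every $L \in \Pic(M_F)$ extends uniquely to $\mc{L} \in \Pic(\mc{M}_{\mc{O}_F})$ (the only vertical divisor $M_0$ is principal, since $2 \in \mc{O}_F$ is a uniformiser). Let $\pi \colon B_{k_F} \to (M_0)_{k_F}$ be the normalisation. I would establish
\[
c_1(L)^2 \;=\; c_1\bigl(\mc{L}|_{(M_0)_{k_F}}\bigr)^2 \;=\; c_1\bigl(\pi^*(\mc{L}|_{(M_0)_{k_F}})\bigr)^2,
\]
the first by constancy of intersection numbers in the flat proper family $\mc{M}_{\mc{O}_F} \to \spec(\mc{O}_F)$, the second using the conductor sequence $0 \to \mc{O}_{M_0} \to \pi_*\mc{O}_B \to \mc{Q} \to 0$: as $\mc{Q}$ is supported on the $1$-dimensional non-normal locus, $\chi(\pi^*\mc{L}^{\otimes n}) - \chi(\mc{L}^{\otimes n}|_{M_0}) = \chi(\mc{L}^{\otimes n}|_{M_0} \otimes \mc{Q})$ is a polynomial of degree $\le 1$ in $n$, so the two Euler characteristics above have the same quadratic coefficient.

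Next, I would compute $\omega_{\mc{X}}|_B$ via adjunction on the $3$-fold $\mc{X}$. Because $\mc{X}_0 = V(2)$ is a principal divisor, $\mc{O}_{\mc{X}}(B) = \mc{O}_{\mc{X}}(-\sum_{B' \ne B} B')$; by Mumford's description the $14$ components adjacent to $B$ in $\mc{X}_0$ meet $B$ exactly along its $7$ exceptional divisors $E_i$ and the $7$ strict transforms $\tilde{\ell}_j$ of $\F_2$-lines. Since $\sum_i E_i + \sum_j \tilde{\ell}_j = 7H - 2\sum_i E_i$ in $\Pic(B)$, adjunction gives
\[
\omega_{\mc{X}}|_B \;=\; \omega_B - \mc{O}_{\mc{X}}(B)|_B \;=\; (-3H + \textstyle\sum E_i) + (7H - 2\textstyle\sum E_i) \;=\; 4H - \textstyle\sum E_i \;=:\; H'.
\]
As $\mc{X} \to \mc{M}$ is étale (since $\Gamma$ is torsion-free), $\omega_M$ specialises to $H'$ on $B$. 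This adjunction step is the main technical input, and it is where Mumford's combinatorial description of the special fibre is essential.

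To conclude, $\Pic(M_F)$ has rank $1$ because $b_2(M) = 1$ and $q(M) = 0$; write $\Pic(M_F)/\text{tors} = \Z L_0$ with $K_M \equiv m L_0$ modulo torsion for some positive integer $m$. Specialisation $\Pic(M_F) \to \Pic(B) \cong \Z^8$ kills torsion (as $\Pic(B)$ is torsion-free), so $m \cdot L_0|_B = K_M|_B = H'$. Since $H' = 4H - \sum E_i$ has coprime coordinates in $\Z^8$, it is primitive; hence $m = 1$, and every $L \in \Pic(M_F)$ maps to $n H'$ for some $n \in \Z$. Therefore $c_1(L)^2 = (nH')^2 = 9n^2 \in 9\Z$. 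In particular, a cube root $L$ of $\omega_M$ defined over $F$ would give $9 = c_1(\omega_M)^2 = 9\, c_1(L)^2$, so $c_1(L)^2 = 1$, contradicting the divisibility.
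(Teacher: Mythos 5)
Your argument is correct, and it reaches the conclusion by a genuinely different route than the paper, although both rest on the same first step and the same geometric input. Like the paper, you use that $F/\Q_2$ unramified keeps the model regular, so every $L$ on $M_F$ extends uniquely to $\mc{M}_{A_F}$, and that $\pic(M_F)$ modulo torsion is $\Z$. The paper then finishes very quickly: from rank one it gets $\mc{L}^{\otimes m}\cong \omega_{\mc{M}}^{\otimes n}$ and restricts to the image in $M_0$ of a single exceptional curve of $B$, where Mumford's computation gives $\deg \omega_{M_0}=1$; integrality of $\deg\mc{L}$ on that curve forces $m\mid n$, hence $9\mid c_1(L)^2$. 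You instead construct a specialisation homomorphism $\pic(M_F)\to \pic(B)\cong \Z^8$ (the constancy of intersection numbers in the flat family and the conductor-sequence comparison with the normalisation are both fine; the latter also follows at once from the projection formula, $\nu$ being finite birational), re-derive Mumford's formula $\nu^*\omega_{M_0}=4H-\sum_i E_i$ by adjunction on the Mustafin--Kurihara model, and use primitivity of this class in $\Z^8$ to see that the generator of $\pic(M_F)$ modulo torsion specialises exactly to it, whence $c_1(L)^2=9n^2$. The key input is identical -- the paper's ``degree $1$ on an exceptional curve'' is precisely a certificate of primitivity of $4H-\sum_i E_i$ -- so your proof is heavier in machinery, but it buys slightly more: it shows $\omega_M$ has no nontrivial root of any order over $F$ and pins down the image of specialisation as $\Z\cdot(4H-\sum_i E_i)$. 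One presentational remark: your adjunction computation takes place on the formal scheme $\mc{X}$, whose special fibre has infinitely many components; this is harmless because only the $14$ components meeting $B$ intervene, but it exactly reproduces the formula the paper simply cites from \cite[p.~238]{mumford-fake}, so citing it would shorten that step.
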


\begin{proof}
  Let $A_F$ be the ring of integers of $F$. Since $\mc{M}$ is regular
  and $F$ is unramified, it follows that $\mc{M}_F := \mc{M}
  \otimes_{\Z_2} A_F$ is also regular. Since $M_0$ is geometrically
  irreducible, it follows that the restriction map $\pic(\mc{M}_F) \to
  \pic(M_F)$ is an isomorphism. In particular, any line bundle $L$ on
  $M_F$ extends uniquely to a line bundle $\mc{L}$ on
  $\mc{M}_F$. Since $M$ is a fake projective plane, $\pic(M_F)$ modulo
  its (finite) torsion subgroup is isomorphic to $\Z$. Since
  $c_1(\omega_M)^2 = 9 \neq 0$, it follows that there exists a
  positive integers $m, n$ so that $\mc{L}^{\otimes m}$ is isomorphic
  to $\omega_{\mc{M}}^{\otimes n}$.

  From the computations of \cite[p.~238]{mumford-fake}, it follows
  that the degree of $\omega_{M_0}$, which is the restriction of
  $\omega_{\mc{M}}$ to $M_0$, on the image of any exceptional divisor
  in $B$ is $1$. Since the degree of $\mc{L}$ on the curve must also
  be an integer, it follows from $\mc{L}^{\otimes m} \cong
  \omega_{\mc{M}}^{\otimes n}$ that $m \mid n$, so $9 =
  c_1(\omega_M)^2 \mid c_1(L)^2$.
\end{proof}

\subsection{}

We now show that $\omega_M$ does have cube roots defined over certain
cubic extensions of $\Q_2$. The existence of cube roots over some
extension also follows from the classification results of Prasad and
Yeung \cite{prasad-yeung}, and the basic principle of our proof is the
same. However, the argument below is elementary and is essentially
immediate from the construction of the groups $\Gamma$. More
importantly, it also gives precise information about the field of
definition of the cube roots.

Let $\ov{\Q}_2$ be an algebraic closure of $\Q_2$. There is a natural
surjection $q:\mr{SL}_3(\ov{\Q}_2) \to \mr{PGL}_3(\ov{\Q}_2)$; we let
$G$ denote the group $q^{-1}( \mr{PGL}_3({\Q}_2))$ so that there is
a short exact sequence
\[
1 \to \mu_3(\ov{\Q}_2) \to G \to \mr{PGL}_3({\Q}_2) \to 1 \ .
\]
Suppose $\Gamma'$ is a subgroup of $G$ mapping isomorphically onto
$\Gamma$ by $q$ and so that all elements of $\Gamma'$ are defined over
a finite extension $k$ of $\Q_2$. Then $\Gamma'$ acts on
$\widehat{\Omega}^2 \otimes_{\Q_2}k$ via $q$ and the base change of
the action of $\Gamma$ on $\widehat{\Omega}^2$. We denote by $O(-1)$
the inverse of the standard generator of $\pic(\P^2_{\Q_2})$ as well
as its restriction to $\widehat{\Omega}^2$ and $\widehat{\Omega}^2
\otimes_{\Q_2}k$. Since the line bundle $O(-1)$ on $\P^2_k$ has a
natural $\mr{SL}_{3,k}$ linearisation, the inclusion of $\Gamma'$ in
$\mr{SL}_3(k)$ gives rise to a linearisation of $O(-1)$ on
$\widehat{\Omega}^2 \otimes_{\Q_2}k$, so it descends to a line bundle
$L$ on $M_k =(\widehat{\Omega}^2 \otimes_{\Q_2}k)/ \Gamma $. Since
$\omega_M$ is the line bundle on $M$ corresponding to $O(-3)$ with its
induced linearisation, it follows that $L^{\otimes 3} \cong
\omega_{M_k}$.

\subsection{}
We now check that subgroups $\Gamma'$ as above exist for all the three
fake projective planes and also determine the extensions $k$
corresponding to these subgroups. This requires explicit knowledge of
the groups $\Gamma$, so we have to consider Mumford's example and the
CMSZ examples separately. However, in both cases $\Gamma$ is contained
in a larger lattice $\Gamma_1$ which can be lifted to a lattice
$\Gamma_1'$ in $G$ in a very simple way.

\subsubsection{The Mumford lattice}
Mumford's lattice $\Gamma$ is a sublattice of index $21$ of the
subgroup $\Gamma_1$ of $\mr{PGL}_3(\Q_2)$ generated by the images of
the matrices
\[
\sigma = 
\begin{bmatrix}
1 & 0 & \lambda \\
0 & 0 & -1 \\
0 & 1 & -1
\end{bmatrix}
\ \ \ \ , \ \ \ 
\tau = 
\begin{bmatrix}
0 & 0 & 1 \\
1 & 0 & 1 + \lambda \\
0 & 1 & \lambda
\end{bmatrix}
\ \ \ \ \ \mathrm{and} \ \ \ 
\rho = 
\begin{bmatrix}
1 & 0 & \lambda \\
0 & 1 & -\lambda^3/2 \\
0 & 0 & \lambda^2/2
\end{bmatrix}
\]
where $\lambda$ is a certain element of $\Q_2$ of the form $2u$ with
$u$ a unit \cite[\S 2]{mumford-fake}. One sees immediately that
$\sigma, \tau \in \mr{SL}_3(\Q_2)$ while $\det(\rho) = \lambda^2/2$
has valuation $1$. Let $\mu$ be a cube root of $\det(\rho)$, $k =
\Q_2(\mu)$ and $\rho' = \mu^{-1} \rho$. Clearly $\rho' \in \mr{SL}_3(k)$
and the image in $\mr{PGL}_3(k)$ of the subgroup $\Gamma_1'$ of
$\mr{SL}_3(k)$ generated by $\sigma$, $\tau$ and $\rho'$ is equal to
$\Gamma_1$. Moreover, since $k$ does not contain a primitive cube root
of $1$, the only scalar matrix in $\Gamma_1$ is the identity. It
follows that $\Gamma_1'$ maps isomorphically onto $\Gamma_1$. We then
let $\Gamma'$ be the inverse image of $\Gamma$ in $\Gamma_1'$. The
three choices for $\mu$ give rise to $3$ such subgroups
$\Gamma_1'$. The three subgroups $\Gamma'$ which they give rise to are
also distinct since, by Lemma \ref{lem:nonexist}, none of the
$\Gamma'$ can be subgroups of $\mr{SL}_3(\Q_2)$.

\subsubsection{The CMSZ lattices}

The lattices constructed by Cartwright, Mantero, Steger and Zappa
\cite[p.~181]{cmsz2} are both sublattices of $\Gamma_1$, the image of the
subgroup of $\mr{GL}_3(\Q_2)$ generated by the elements
\[
a_3 =
\begin{bmatrix}
0 & 0 & -(S-1)/4 \\
1 & 0 & 1 \\
0 & 1 & (S-1)/4
\end{bmatrix}
\ \ \ \mr{and} \ \ \ 
s = 
\begin{bmatrix}
0 & -1 & -(S-1)/4 \\
1 & -1 & -(S-5)/4 \\
0 & 0 & 1
\end{bmatrix}
\]
where $S\in \Z_2$ is the square root of $-15$ which is congruent to
$1$ modulo $4$ \cite[p.~182]{cmsz2}. Clearly, $s \in \mr{SL}_3(\Q_2)$
while $\det(a_3) = (S-1)/4$.  Since $(S-1)(S+1) = -16$, it follows
that in fact the valuation of $S-1$ is $3$ and so $(S-1)/4$ is a
uniformizer. By letting $k$ be the extension of $\Q_2$ obtained by
adjoing a cube root as above and then modifying $a_3$, it follows as
in the previous case that we get three distinct lifts of $\Gamma$ (for
both choices of $\Gamma$).

\subsubsection{}
In each of the cases discussed above, it can be seen that $\mr{Hom}
(\Gamma,\mu_3)$ has order three, so the three lifts that we have
constructed are in fact all.

\section{Cohomology of line bundles}

Henceforth, $M$ denotes any one of the fake projective planes
considered earlier.  We let $K$ be the Galois closure of any of the
cubic extensions $k$ of $\Q_2$ of the previous section, so it is a
Galois extension of $\Q_2$ with Galois group $S_3$, containing the
unramified quadratic extension $F = \Q_2(\zeta)$, with $\zeta^2 +
\zeta + 1 =0 $; the extension $K/F$ is totally ramified. We will
compute the dimensions of the cohomology groups of all line bundles on
$M_K$ contained in the subgroup $P$ of $\pic(M_K)$ generated by the
cube roots of $\omega_{M_K}$ constructed above and the line bundles of
order two coming from characters of $\Gamma$ in $\Q_2^{\times}$. Using
the explicit description of the groups $\Gamma$ given in
\cite{mumford-fake} and \cite{cmsz2} or the figures at the end of
\cite{ishida-kato}, one can see that this group is isomorphic to $\Z
\times \Z/3 \times (\Z/2)^2$ for the Mumford surface and one of the
CMSZ surfaces and to $\Z \times \Z/3$ for the other CMSZ surface. We
do not use these computations in the sequel so we do not give the
details and the arguments that follow do not depend on a case by case
analysis of the surfaces.

We define the degree of $L$, $\deg(L)$, to be the positive square
root of $c_1(L)^2$ if $L$ is ample and its negative otherwise. Since
$\omega_M$ is ample, in order to compute the cohomology of all line
bundles it suffices, by Serre duality, to consider only line bundles
$L$ which are ample.

\begin{prop} \label{prop:cohom} For any $L \in P$ let $h^i(L)$ denote
  the dimension of $H^i(M_K,L)$.  Let $L \in P$ be ample and let $d =
  \deg(L)$.
\begin{enumerate}
\item If $d \in \{1,2\}$, then $h^i(L) = 0$ for all $i$. 
\item If $d = 3$, then $h^i(L) = 0$ for $i=0,1$ and $h^2(L) =
  1$ if $L \cong \omega_M$ else $h^0(L) = 1$ and $h^i(L) = 0$
  for $i=1,2$. 
\item If $d>3$, then $h^0(L) = (d-1)(d-2)/2$  and
  $h^i(L) = 0$ for $i=1,2$.
\end{enumerate}
Furthermore, $h^1(L) = 0$ for any $L \in P$.
\end{prop}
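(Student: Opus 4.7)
The plan is to reduce the proposition to a small set of cohomological vanishings on the generic fibre, prove these by extending the line bundles to the integral model $\mc{M}_{A_K}$ (with $A_K$ the ring of integers of $K$) and applying upper semicontinuity to transport the problem to the geometric special fibre $\ov{M}_0$, where it is settled by an explicit computation on the normalisation $B_{\ov{\F}_2}$.

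First I would record what Riemann--Roch, Serre duality, and Kodaira vanishing give for free. Since $M$ is a fake projective plane $\chi(\mc{O}_M) = 1$, and $\pic(M_K)$ modulo torsion is $\Z$ generated by the degree-one class of a cube root of $\omega_M$ in $P$, so Riemann--Roch yields $\chi(L) = 1 + d(d-3)/2$ for $L \in P$ of degree $d$. When $d > 3$, $L \otimes \omega_M^{-1}$ is ample and Kodaira vanishing gives $h^1(L) = h^2(L) = 0$, proving (3). For $L = \omega_M$, $h^0 = p_g = 0$, $h^1 = q = 0$, and $h^2 = 1$ by Serre duality. Since Serre duality preserves $P$ (because $\omega_M \in P$), the nonpositive-degree cases reduce to the positive-degree ones; and $h^1(L) = 0$ for a nontrivial torsion $L$ follows from the degree-three non-canonical case via $h^2(L) = h^0(\omega_M \otimes L^{-1})$ and $\chi(L) = 1$. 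After these formal reductions, what remains to prove is (i) $h^0(L) = 0$ for $L \in P$ ample of degree $1$ or $2$, and (ii) $h^1(L) = 0$ for $L \in P$ ample of degree $1$, $2$, or $3$.

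Next I would extend each such $L$ to an invertible sheaf $\mc{L}$ on $\mc{M}_{A_K}$. This is available because the cube roots of $\omega_M$ arise from $\Gamma'$-linearisations of $O(-1)$ on $\widehat{\Omega}^2 \otimes_{\Q_2} k$, and Mumford's integral model of $O(-1)$ on $\mc{X}$ inherits the same $\mr{SL}_3(\Z_2)$-linearisation, so descending by the isomorphism $\Gamma' \cong \Gamma$ produces the required integral extension; torsion characters of $\Gamma$ extend similarly. Upper semicontinuity of $h^i$ in the proper flat family $\mc{M}_{A_K} \to \spec(A_K)$ yields $h^i(M_K, L) \leq h^i(\ov{M}_0, \mc{L}|_{\ov{M}_0})$, so (i) and (ii) reduce to analogous vanishings on $\ov{M}_0$. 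Here I would invoke the $\gal(K/\Q_2) \cong S_3$ action: its inertia subgroup $\gal(K/F) \cong \Z/3$ acts trivially on $\ov{M}_0$ but permutes the three cube roots $L_1, L_2, L_3$ of $\omega_M$ transitively, so they all restrict to a single line bundle $L_0$ on $\ov{M}_0$ with $L_0^{\otimes 3} \cong \omega_{\ov{M}_0}$. Pulling back via the normalisation $\pi : B_{\ov{\F}_2} \to \ov{M}_0$ and using that $B_{\ov{\F}_2}$ is simply connected (so the torsion characters of $\Gamma$ pull back trivially), the class $\pi^*(\mc{L}|_{\ov{M}_0})$ in $\pic(B_{\ov{\F}_2})$ depends only on $d$ and on $\pi^* L_0$; the latter is pinned down, up to a small ambiguity, by Mumford's computation that $\omega_{M_0}$ has degree $1$ on each exceptional curve together with Frobenius equivariance from $\gal(F/\Q_2)$.

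Having identified $\pi^*(\mc{L}|_{\ov{M}_0})$ in $\pic(B_{\ov{\F}_2})$, its cohomology is computable directly from the blowup presentation of $B_{\ov{\F}_2}$ over $\P^2_{\ov{\F}_2}$. To descend to $\ov{M}_0$, I would use the short exact sequence
\[
0 \lr \mc{L}|_{\ov{M}_0} \lr \pi_* \pi^* (\mc{L}|_{\ov{M}_0}) \lr \mc{Q} \lr 0
\]
with $\mc{Q}$ supported on the non-normal locus of $\ov{M}_0$, compute the cohomology of $\mc{Q}$ from the explicit geometry of that locus, and combine this with the cohomology on $B_{\ov{\F}_2}$ via the long exact sequence. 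The main obstacle is precisely this final combined step: distinguishing the $2$-torsion twists in $P$ — which disappear on the simply connected $B$ but must be recovered from $\mc{Q}$ — and matching the Galois-theoretic constraints to the explicit geometry of the non-normal locus of $M_0$. Handling this uniformly is what permits the three fake projective planes to be treated in parallel rather than case by case.
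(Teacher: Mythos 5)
The central step of your plan---extending every $L \in P$ of degree $1$ or $2$ to an invertible sheaf on $\mc{M} \otimes_{\Z_2} A_K$ and then applying semicontinuity---cannot be carried out, and this is precisely the obstruction the paper is organised around. Since $K/F$ is totally ramified, $\mc{M} \otimes_{\Z_2} A_K$ is not regular, so line bundles on $M_K$ need not extend to the model; and in fact the cube roots of $\omega_{M_K}$ (hence all elements of $P$ of degree prime to $3$) provably do not extend as line bundles restricting to the special fibre $M_0 \otimes \F_4$: the pullback of such an extension to the normalisation $B_{\ov{\F}_2}$ would be a cube root of $\nu^*\omega_{M_0} = O_B(\pi^*4H - \sum_{i=1}^7 E_i)$ in $\pic(B_{\ov{\F}_2}) \cong \Z^8$, and no such cube root exists since $(4,-1,\dots,-1)$ is not divisible by $3$. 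This is also the content of Lemma \ref{lem:nonexist} in integral form ($9 \mid c_1^2$ for anything that spreads out over an unramified base). Your justification via an ``integral model of $O(-1)$ on $\mc{X}$ with its $\mr{SL}_3(\Z_2)$-linearisation'' does not exist in the required sense: the components of the special fibre of $\mc{X}$ are blowups of $\P^2_{\F_2}$, not $\P^2$, and no line bundle on $\mc{X}$ restricts to a cube root of the dualising sheaf on them. If instead you pass to a regular or semistable model over $A_K$, the special fibre changes and is no longer normalised by $B$, so the reduction to an explicit computation on $B_{\ov{\F}_2}$ is lost. (A reflexive-sheaf extension would be flat over $A_K$, but its restriction to the special fibre is not a line bundle and identifying its sections is exactly the delicate point you defer to ``the final combined step''.)

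The paper circumvents this by specialising divisors rather than line bundles: for $d=2$ one first rules out $h^0(L) \geq 2$ by a multiplication-map argument using a Galois conjugate $L' \not\cong L$ (possible by Lemma \ref{lem:nonexist}) and the already-known $h^0 = 3$ in degree $4$; then, if $h^0(L)=1$, the Galois-invariant divisor $D + D' + D''$ lies in $|\omega_{M}^{\otimes 2} \otimes T|$, which \emph{is} defined over $\Q_2$ and does specialise to $M_0$, and the totally ramified action of $\gal(K/F)$ forces multiplicity divisible by $3$ along components, contradicting the combinatorial Lemma \ref{lem:three} on $B$. Your proposal contains no substitute for either step. Finally, your claim that $h^1 = 0$ in the degree-$3$ non-canonical case follows formally is not right: $\chi(L)=1$ and $h^2(L)=0$ only give $h^0(L) = 1 + h^1(L) \geq 1$, and the needed upper bound $h^0(\omega_M \otimes T) \leq 1$ is obtained in the paper by the \'etale cover argument ($\pi_*O_{M'} = \oplus_i T^{\otimes i}$, $p_g(M') = e-1$, $q(M')=0$); semicontinuity could at best reduce this to computing $h^0$ of $\omega_{M_0}$ on the non-normal special fibre, which you do not carry out.
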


It seems reasonable to expect that a similar result holds for all line
bundles on all fake projective planes.

\begin{proof}
  Since $c_1(\omega_M)^2 = 9$, we have $c_1(L)\cdot c_1(\omega_M) =
  3d$. Moreover, $\chi(O_M) = 1$, so by the Riemann--Roch theorem for
  surfaces we have $\chi(L) = (d-1)(d-2)/2$.

  If $d>3$ then $L \otimes \omega^{-1}$ is ample so the result in this
  case follows from the Kodaira vanishing theorem.


  We now consider the case $d = 2$ and assume that $h^0(L) > 0$. By
  the definition of $P$, $L$ is isomorphic to $L_1^{\otimes 2} \otimes
  T$, where $L_1$ is one of the cube roots of $\omega_{M_K}$
  constructed in \S 2 and $T$ is either a trivial line bundle or has
  order $2$. The action of $\gal(K/\Q_2)$ on $P$ preserves all the
  bundles of order $2$ and permutes the three cube roots of
  $\omega_{M_K}$, so it follows that $L$ has two other Galois
  conjugates, say $L'$ and $L''$, and $L \otimes L' \otimes L'' \cong
  \omega_{M_K}^{\otimes 2} \otimes T$.

  Let $k/\Q_2$ be the cubic extension over which $L$ is defined and
  let $D$ be any divisor (defined over $k$) in the linear system
  corresponding to $L$. It follows that $D$ has two Galois conjugates,
  $D'$ and $D''$, such that $O(D') \cong L'$ and $O(D'') \cong
  L''$. Then $C_K := D + D' + D''$ is a Galois invariant divisor in
  the linear system corresponding to $\omega_{M_K}^{\otimes 2} \otimes
  T$, so it is the base change of a divisor $C$ on $M$.  Let $C_0$ be
  the specialisation of $C$ in $M_0$. Since $T$ corresponds to a
  character of $\Gamma$ of order at most $2$, the specialisation of
  $T$ is trivial, hence $C_0$ is a Cartier divisor in the linear
  system corresponding to $\omega_{M_0}^{\otimes 2}$.

  We claim that the Weil divisor associated to $C_0$ is divisible by
  $3$ in the group of Weil divisors on $M_0$. It suffices to prove
  this over $M_{0,\F_4}$, and since $F/\Q_2$ is unramified (so
  specialisation commutes with base change) it is enough to consider
  the specialisation of $C_F$ as a divisor on $M_F \subset \mc{M}_R$,
  where $R$ is the ring of integers in $F$.

  Observe that no prime divisor in the support of $D$ can be preserved
  by $\gal(K/F) \cong \Z/3$, since any such divisor would descend to a
  divisor of degree $1$ or $2$ defined over $F$ which is not possible
  by Lemma \ref{lem:nonexist} (since $F/\Q_2$ is unramified). It
  follows that each prime divisor $Z$ in the support of $C_F$ splits
  into a sum of three prime divisors over $K$. We show that the
  specialisation of any such $Z$ has multiplicity $3$ along each
  component of its support.

  Let $\mc{Z}$ be the Zariski closure of $Z$ in $\mc{M}_R$ and
  $\wt{\mc{Z}}$ its normalisation. Since $Z$ splits into $3$
  components over $K$, the function field of $Z$ must contain $K$.
  Thus, since $\wt{\mc{Z}}$ is normal, the morphism $\wt{\mc{Z}} \to
  \spec(R)$ factors though $\spec(S)$, where $S$ is the ring of
  integers of $K$. The specialisation of $\wt{\mc{Z}}$ is given by the
  valuations of a uniformizer of $R$ with respect to the discrete
  valuation corresponding to the generic point of each irreducible
  component of the closed fibre. Since $K/F$ is ramified and of degree
  $3$, so a uniformizer in $R$ is (up to a unit) the cube of a
  uniformizer in $S$, it follows that each irreducible component of
  $\wt{\mc{Z}} \times_R \F_4$ has multiplicity divisible by $3$. Since
  specialisation commutes with proper pushforward \cite[Proposition
  20.3]{fulton-it}, the same holds for the irreducible components of
  $\mc{Z} \times_R \F_4$, thereby proving the claim.

  From the claim we see that all the irreducible components of $C_0$
  have multiplicity divisible by $3$ in the corresponding Weil
  divisor. By Lemma \ref{lem:three} below no such divisor exists.
  Thus, if $d=2$, we must have $h^0(L) = 0$.

  If $d=1$ and $h^0(L) \neq 0$, then $h^0( L^{\otimes 2}) \neq 0$.  It
  follows from the $d=2$ case already considered that this is not
  possible.

  If $d \in \{1,2\}$, then $L^{\otimes -1} \otimes \omega_{M_K}$ has
  degree $3-d \in \{1,2\}$, so it follows from the above and Serre
  duality that $h^2(L) = 0$. Since $\chi(L) = 0$, it follows that we
  also have $h^1(L) = 0$.

  If $d=3$ and $L \cong \omega_M$, then the statements follow since
  $p_g(M) = q(M) = 0$. Otherwise, $L_0 = L \otimes \omega_M^{-1}$ is a
  non-trivial torsion line bundle, so by Serre duality $h^2(L) =
  0$. Since $\chi(L) = 1$, it follows that $h^0(L) > 0$. Since $L \in
  P$, the line bundle $L_0$ corresponds to a non-trivial homomorphism
  $f$ from $\Gamma$ into $K^{\times}$. Let $\pi:M' \to M_K$ be the
  cover of $M_K$ corresponding to $\mr{Ker}(f)$. Then $\pi_*(O_{M'})$
  is isomorphic to $\oplus_{i=0}^{e-1} (L_0)^{\otimes i}$, where $e$
  is the order of $L_0$ in $\pic(M_K)$. From the projection formula,
  $\pi_*(\omega_{M'})$ is isomorphic to $\oplus_{i=0}^{e-1}\omega_M
  \otimes (L_0)^{\otimes i}$. We have $\chi(\omega_{M'}) = e
  \chi(\omega_M)$ and $q(M') = 0$ by \cite[p.~238]{mumford-fake}, so
  $p_g(M') = e-1$. It follows that $\sum_{i=0}^{e-1} h^0(\omega_M
  \otimes (L_0)^{\otimes i}) = e-1$. Since all summands except for
  $i=0$ must be at least $1$, it follows that they are all equal to
  $1$. In particular, $h^0( \omega_M \otimes L_0) = h^0(L) = 1$. Since
  $\chi(L)$ is also $1$, it follows that $h^1(L) = 0$.

  The last statement follows from the previous claims if $L$ is
  ample. If $L$ is not ample, then $\omega_{M_K}\otimes L^{-1}$ is
  ample so the claim follows by Serre duality.
\end{proof}

\begin{lem} \label{lem:three} There is no (Cartier) divisor $C_0$ in the
  linear system on $M_0$ associated to $\omega_{M_0}^{\otimes 2}$ such
  that the associated Weil divisor has multiplicity divisible by $3$
  along each geometric component of its support.
\end{lem}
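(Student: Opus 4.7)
The plan is to pull $C_0$ back along the normalisation $\nu : B \to M_0$ and derive a contradiction from Chern-class computations on $B$ modulo $3$. Recall that $\nu$ identifies each exceptional curve $E_i$ of $B$ with the strict transform $L_{\sigma(i)}$ of a unique $\F_2$-rational line, where $\sigma$ is a permutation determined by the Bruhat--Tits gluing data. Since $M_0$ has simple double-curve singularities along the seven images $Y_i = \nu(E_i)$, the standard adjunction formula for a normalisation gives $\nu^*\omega_{M_0} = \omega_B(\sum E_i + \sum L_j)$. In the basis $\{H, E_1, \ldots, E_7\}$ of $\pic(B)$ (with $H$ the hyperplane class pulled back from $\P^2$), this evaluates to $\nu^*\omega_{M_0} = 4H - \sum E_i$, and hence $\nu^*\omega_{M_0}^{\otimes 2} = 8H - 2\sum E_i$, a class manifestly not divisible by $3$ in $\pic(B)$.

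For any prime divisor $\tilde C \subset B$ distinct from the $E_i$ and $L_j$, the map $\nu$ is an isomorphism at the generic point of $\tilde C$, so $\mr{mult}_{\tilde C}(\nu^*C_0) = \mr{mult}_{\nu(\tilde C)}(C_0)$ is divisible by $3$ by hypothesis. At the generic point of each $Y_i$ the local ring of $M_0$ is isomorphic to the non-regular one-dimensional local ring $k(z)\{x,y\}/(xy)$, for which a direct length computation gives the identity $\mr{mult}_{Y_i}(C_0) = \mr{mult}_{E_i}(\nu^*C_0) + \mr{mult}_{L_{\sigma(i)}}(\nu^*C_0)$. Setting $a_i = \mr{mult}_{E_i}(\nu^*C_0)$ and $b_j = \mr{mult}_{L_j}(\nu^*C_0)$, the divisibility hypothesis on $C_0$ translates to $a_i + b_{\sigma(i)} \equiv 0 \pmod 3$ for each $i$.

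Write $\nu^*C_0 = \sum a_i E_i + \sum b_j L_j + 3 D'$ with $D'$ an effective divisor whose support contains no $E_i$ or $L_j$. Expanding $[L_j] = H - \sum_{i \in j} E_i$ and reducing modulo $3$ in $\pic(B)$ yields the congruences $\sum_j b_j \equiv 2 \pmod 3$ and $a_i \equiv 1 + \sum_{j \ni i} b_j \pmod 3$. Combined with $a_i + b_{\sigma(i)} \equiv 0 \pmod 3$, this condenses to the single linear system $(P + \Phi)\bar b \equiv 2\cdot \mathbf 1 \pmod 3$ on $\bar b \in (\Z/3)^7$, where $P$ is the permutation matrix of $\sigma$ and $\Phi$ is the $7\times 7$ incidence matrix of the Fano plane. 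The main obstacle is to show this system is inconsistent; the natural route is to exhibit $v \in \ker (P + \Phi)^T$ with $\sum v_i \not\equiv 0 \pmod 3$, exploiting $\Phi \mathbf 1 \equiv 0 \pmod 3$ (each line contains three points) together with the combinatorial structure of $\sigma$ dictated by the $\mr{PGL}_3(\Q_2)$-action on the building.
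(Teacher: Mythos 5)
Your setup is fine as far as it goes: the identity $\nu^*\omega_{M_0}=O_B(4H-\sum_i E_i)$ agrees with Mumford's computation, the length argument at the generic point of a double curve correctly gives $\mr{mult}_{Y_i}(C_0)=a_i+b_{\sigma(i)}$, and the resulting congruences $\sum_j b_j\equiv 2$, $a_i\equiv 1+\sum_{j\ni i}b_j$, $a_i+b_{\sigma(i)}\equiv 0 \pmod 3$ are correct. But the proof is not finished at the point where you stop, and worse, the step you defer (``exhibit $v\in\ker(P+\Phi)^T$ with $\sum v_i\not\equiv 0$'') is impossible: the system $(P+\Phi)\bar b\equiv 2\cdot\mathbf 1\pmod 3$ is \emph{always} consistent, for every matching $\sigma$. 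Indeed $\bar b\equiv 2\cdot\mathbf 1$ is a solution, because $P\mathbf 1=\mathbf 1$ (a permutation matrix) and $\Phi\mathbf 1=3\cdot\mathbf 1\equiv 0$ (each point of the Fano plane lies on exactly three lines), so $(P+\Phi)(2\mathbf 1)\equiv 2\mathbf 1$. Equivalently, any $v$ in the left kernel of $P+\Phi$ automatically satisfies $v^T\cdot 2\mathbf 1=v^T(P+\Phi)(2\mathbf 1)=0$. So no contradiction can be extracted from the class of $\nu^*C_0$ in $\pic(B)\otimes\Z/3$ alone; the obstruction to such a divisor is not linear-algebraic mod $3$.

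What your reduction discards is exactly what the paper uses: effectivity and the degree bound. Writing $\nu^*C_0=\pi^*C_1-2\sum_i E_i$, one gets an \emph{effective} plane curve $C_1$ of degree $8$ over $\F_2$ passing through all seven $\F_2$-rational points with multiplicity at least $2$, and the divisibility hypothesis couples the multiplicity of each exceptional curve $E_i$ with that of the corresponding line $F_{\sigma(i)}$. The paper's proof then runs a case analysis on the possible supports of $C_1$ (non-linear components forced to have multiplicity divisible by $3$, hence of total degree $3$ or $6$; then configurations of $3,4,5,6$ or $7$ rational lines), repeatedly using $\F_2$-point counts of lines and conics, the bound $\deg C_1=8$, and the fact that a point of multiplicity $>2$ on $C_1$ forces the corresponding line into the support. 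None of these inputs survive in your mod-$3$ Picard computation, and the explicit solution $\bar b\equiv 2\cdot\mathbf 1$ shows they cannot be dispensed with. To salvage your approach you would have to reintroduce the inequalities $a_i,b_j\geq 0$, the degree constraint, and the multiplicity-$\geq 2$ condition at the rational points, at which point you are essentially led back to the combinatorial argument of the paper.
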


\begin{proof}
  Suppose such a divisor $C_0$ exists.  Let $\nu: B \to M_0$ be the
  normalisation morphism and $\pi: B \to \P^2_{\F_2}$ the morphism
  blowing up all $\F_2$-rational points of $\P^2_{\F_2}$. The morphism
  $\nu$ identifies each exceptional curve $E_i$ with the strict
  transform of a $\F_2$-rational line $F_i$ in $\P^2_{\F_2}$ in a way
  that depends on the particular fake projective plane under
  consideration. According to \cite[p.~238]{mumford-fake},
  $\nu^*(\omega_{M_0})$ is equal to $O_B(\pi^*4H -\sum_{i=1}^7E_i)$,
  where $H$ is the class of a line in $\P^2_{\F_2}$. It follows that
  $\wt{C}:=\nu^*(C_0) = \pi^*(C_1) - \sum_{i=1}^72E_i$ (as Cartier
  divisors) where $C_1$ is a curve of degree $8$ in $\P^2_{\F_2}$
  passing through all the $\F_2$-rational points and having
  multiplicity at least $2$ at each such point.

  If $\nu(E_i) \subset M_0$ is contained in the support of $C_0$ for
  some $i$, then both $E_i$ and $F_i$ must be contained in the support
  of $\wt{C}$. Moreover, since the multiplicity of this component is
  divisible by $3$, the sum of the multiplicities of $E_i$ and $F_i$
  in $\wt{C}$ must be divisible by $3$.

  First suppose that the support of $C_0$ is not contained in the
  double point locus of $M_0$. Then $C_1$ has a component which is not
  a rational line and the multiplicity of each such component must be
  be divisible by $3$; let $C_1'$ be the union of all such components
  (with multiplicity). Since $\deg(C_1) = 8$, it follows that
  $\deg(C_1')$ is $3$ or $6$. If $\deg(C_1') = 3$, then $C_1'$ must be
  a triple (rational) line which is not possible by assumption. If
  $\deg(C_1') = 6$ then the corresponding reduced curve must be a
  conic and $C_1'' := C_1 - C_1'$ is either a double line or a union
  of two distinct lines.

  Suppose $C_1''$ is a double line. Since $C_1$ must contain all
  rational points it follows that $C_1'$ must contain at least $4$
  rational points. Since a smooth conic over $\F_2$ has only $3$
  rational points and a singular (irreducible) conic has only $1$
  rational point, it follows that $C_1'$ must be a union of two
  rational lines which is a contradiction.

  Suppose $C_1''$ is a union of two distinct rational lines, say $F_i$
  and $F_j$. It then follows that both $E_i$ and $E_j$ must have
  multiplicity at least $2$ in $\wt{C}$, so the corresponding points
  $p_i$ and $p_j$ in $\P^2_{\F_2}$ have multiplicity at least $4$ in
  $C_1$. Since the union of $F_i$ and $F_j$ contains $5$ rational
  points, $C_1'$ must be a smooth conic. Since a smooth conic contains
  $3$ rational points, it follows that $ (F_i \cup F_j) \cap C_1'$
  contains at most $1$ rational point. Since $F_i$ and $F_j$ have
  multiplicity $1$ in $C_1$, it follows that there is at most one
  point of multiplicity at least $4$ on $C_1$, a contradiction.

  It remains to consider the case that $C_0$ is contained in the
  singular locus of $M_0$, so $C_1$ is a union of rational lines (with
  multiplicity). Since every rational point must lie on $C_1$, $C_1$
  must have at least three irreducible components.

  If there are exactly three components, then there must be a (unique)
  point $p$ contained in all of them since this is the only way that
  the union of three lines in $\P^2_{\F_2}$ can contain all rational
  points. Since $\deg(C_1) =8$, $C_1$ has multiplicity $8$ at $p$, so
  the exceptional divisor $E_p$ has multiplicity $6 > 0$, in
  $C$. It follows that the strict transform of the corresponding line $F_p$
  must also be in the support of $\wt{C}$, so $F_p$ must be one
  of the lines in the support of $C_1$ and its multiplicity is
  divisible by $3$. If the multiplicity is $3$, then since $\deg(C_1)
  = 8$ one of the other lines must also have multiplicity $\geq
  3$. But then the $5$ points on the union of these lines will have
  multiplicity $>2$ in $C_1$, so the corresponding exceptional
  divisors must have multiplicity $>0$ in $C$. But this
  implies that $C_1$ must contain the $5$ rational lines corresponding
  to these exceptional divisors which is a contradiction. If the
  multiplicity of $F_p$ in $C_1$ is $6$ then both the other lines must
  have multiplicity one, but then there would exist rational points on
  $C_1$ of multiplicity $1$ which is not possible. Thus $C_1$ cannot
  have three components.

  Suppose $C_1$ has exactly $4$ components. Since all rational points
  must lie on $C_1$, one sees that there is only one such
  configuration (up to automorphisms of $\P^2_{\F_2}$) consisting of
  the union of all three lines passing through a distinguished point
  together with one other line $F$. It follows that exactly one
  rational point lies on $3$ lines, $3$ rational points lie on $2$
  lines each and the remaining $3$ rational points on a single line
  each. These three lines must have multiplicty at least $2$, since
  the multiplicity of any rational point on $C_1$ must be at least
  $2$. Furthermore, these three lines intersect $F$ in distinct
  points, so the rational points $p_1$, $p_2$ and $p_3$ on this line
  have multiplicity $>2$ on $C_1$. It follows that the exceptional
  divisors $E_{p_1}$, $E_{p_2}$ and $E_{p_3}$ corresponding to these
  points must be contained in the support of $\wt{C}$, so the
  corresponding lines $F_{p_1}$, $F_{p_2}$ and $F_{p_3}$ must be
  contained in the support of $C_1$. If $F$ has multiplicity $2$, then
  the multiplicity of $C_1$ at all $p_i$, $i=1,2,3$ is exactly
  $4$. But then the multiplicity of each $E_{p_i}$ in $C$ is $2$ so
  the multiplicity of each $F_{p_i}$ in $C_1$ must be congruent to $1$
  modulo $3$. Since $2 \not\equiv 1 \mod 3$ this is a contradiction.

  It follows that $F$ must have multiplicity $1$ and so one of the
  other lines must have multiplicity $3$. But then there are at least
  $5$ points on $C_1$ with multiplicity at least $3$ which implies
  that $C_1$ has at least $5$ lines in its support, also a
  contradiction.

  Suppose that there are $5$ lines in the support of $C_1$ so there
  are exactly $5$ rational points on $C_1$ with multiplicity $>2$.
  The union of any three lines contains at least $6$ rational points
  so at most $2$ of the lines are multiple and the multiplicities must
  be one line of multiplicity $2$ and another of multiplicity $3$ or a
  single line of multiplicity $4$.

  There is a unique configuration of $5$ lines up to automorphism. In
  such a configuration, there is one point lying on a single line, $4$
  on two lines each and the remaining $2$ lie on $3$ lines. The line
  $F$ containing the point $p$ which lies on only one line must be
  multiple, since the multiplicity of each point must be at least
  $2$. If the multiplicity is $4$, then there is no other multiple
  component, so there are $4$ points of multiplicity $2$ which is not
  possible.

  If the multiplicity of $F$ is $2$, then there is another component
  of multiplicity $3$. Then there are still three rational points of
  multiplicity $2$ which is not possible.

  If the multiplicity of $F$ is $3$, then there is another component
  of multiplicty $2$. Then the multiplicities of the points are
  $2,2,3,3,3,5,6$ so the multiplicities of the exceptional divisors
  are $0,0,1,1,1,3,4$. Since the sum of the multiplicity of any
  exceptional divisor and the line corresponding to it is divisible by
  $3$, it follows that there must be $4$ multiple lines which is a
  contradiction.

  Suppose there are $6$ lines in the support of $C_1$. Then there are
  $3$ collinear points which lie on $2$ lines each and the remaining
  $4$ points lie on $3$ lines each. Also, there must be exactly $6$
  rational points with multiplicity at least $3$ on $C_1$. Thus, there
  are exactly two points in the $3$ element set of points lying on
  only $2$ lines and a line containing each point of multiplicity
  $2$. The intersection point of these two lines will then have
  multiplicity $5$ and the two other points on them have
  multiplicity $4$. Thus, the multiplicities of the rational points on
  $C_1$ must be $2,3,3,3,4,4,5$ which, using the congruence modulo
  $3$ as above, implies that there must be at least $3$ lines of
  multiplicity $>1$ which is a contradiction.

  Finally, we consider the case that the support of $C_1$ is the union
  of all $7$ rational lines. Since each rational point lies on $3$
  lines and exactly one of the lines, call it $F$, must be double, the
  $4$ rational points not on $F$ have multiplicity $3$ on $C_1$. By
  the congruence argument as before, this implies that there must be
  $4$ multiple lines, a contradiction.

\end{proof}

\section{Exceptional collections}

Let $X$ be a smooth projective variety over a field $K$. A sequence of
objects $E_1,E_2,\dots,E_n$ of $\me{D}^b(X)$, the bounded derived
category of coherent sheaves on $X$, is called an exceptional
collection if $\mr{Hom}(E_j,E_i[k])$ is non-zero for $j \geq i$ and $k
\in \Z$ iff $i=j$ and $k=0$, in which case it is one
dimensional. Galkin, Katzarkov, Mellit and Shinder have conjectured
\cite[Conjecture 3.1]{GKMS} that if $X$ is an $n$-dimensional fake
projective space over $\C$ such that the canonical bundle $\omega_X$
has an $(n+1)$-th root $O_X(-1)$, then the line bundles $O_X, O_X(-1),
\dots, O_X(-n)$ form an exceptional collection. If $X$ is a surface,
they observe that to prove this it suffices to show that
$H^0(X,O_X(2)) = 0$. This conjecture appears to be difficult to prove
in general, but the computations of the previous section lead to the
following:

\begin{thm} \label{thm:exceptional}
\begin{enumerate}[(a)]
\item  Let $M$ be a $2$-adically
  uniformised fake projective plane over $\Q_2$ and let $L_1, L_2 \in
  P$ be line bundles of degree $-1$ and $-2$. Then the sequence of
  line bundles $O_{M_K},L_1,L_2$ is an exceptional
  collection.
\item Let $\me{B} = \langle O_{M_K},L_1,L_2\rangle$, the subcategory
  of $\me{D}^b(X)$ generated by $O_{M_K}$, $L_1$ and $L_2$ and let $\me{A}
  = \me{B}^{\perp}$. Then $HH_{\bullet}(\me{A}) = 0$, i.e., $\me{A}$
  is a quasiphantom category. Moreover, the dimensions of the vector
  spaces $HH^{t}(\me{A})$, $t \geq 0$, are given by the sequence
  $1,0,0,28,54,27,0,0,\dots$. In particular, the product of any two
  elements of $HH^{\bullet}(\me{A})$ of positive degree is $0$.
\end{enumerate}
\end{thm}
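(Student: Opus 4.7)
For part (a), the exceptional collection conditions unwind into cohomological vanishings. The diagonal requirements $\mr{Hom}(L_i, L_i[k]) = 0$ for $k > 0$ become $\h^k(M_K, O_{M_K}) = 0$ for $k > 0$, which holds since $p_g(M) = q(M) = 0$. The cross requirements $\mr{Hom}(L_j, L_i[k]) = 0$ for $j > i$ and all $k$ amount to the vanishing of all cohomology groups of the three line bundles $L_1^{-1}$, $L_2^{-1}$ and $L_1 \otimes L_2^{-1}$; these lie in $P$, are ample, and have degrees $1$, $2$ and $1$ respectively, so Proposition~\ref{prop:cohom}(1) applies.

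For the vanishing of $HH_\bullet(\me{A})$ in part (b), I would use additivity of Hochschild homology under semi-orthogonal decompositions together with Hochschild--Kostant--Rosenberg. The Hodge diamond of a fake projective plane is concentrated on the diagonal with $h^{0,0} = h^{1,1} = h^{2,2} = 1$ (since $p_g = q = 0$ forces the off-diagonal second-row entries to vanish, and $b_3 = 0$ forces $h^{2,1} = 0$), so $HH_\bullet(M_K)$ has total dimension $3$ and is concentrated in degree $0$. The length-$3$ exceptional collection contributes the same to $HH_\bullet(\me{B})$, hence $HH_\bullet(\me{A}) = 0$.

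For $HH^t(\me{A})$ I would apply Kuznetsov's spectral sequence from \cite{kuznetsov-qph}. In our setting its input consists of $\mr{Ext}$ groups between the exceptional objects $O_{M_K}, L_1, L_2$ twisted by powers of the anticanonical bundle; by adjunction these are cohomology groups of line bundles in $P$, tabulated by Proposition~\ref{prop:cohom}. The plan is to enumerate the relevant terms, verify degeneration (consistent with the paper's remark that the spectral sequence has a ``very simple form'' here), and sum the dimensions to recover the sequence $1,0,0,28,54,27,0,0,\ldots$. The assertion about products of positive-degree elements is then formal: all nonzero classes sit in degrees $\{0,3,4,5\}$, so any product of two positive-degree classes has degree $\geq 6$, where $HH^\bullet(\me{A})$ vanishes.

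The main obstacle is the bookkeeping for Kuznetsov's spectral sequence: identifying which $\mr{Ext}$ groups appear on the $E_1$-page with the correct multiplicities, confirming degeneration, and verifying that the dimensions add up to $28$, $54$ and $27$ in degrees $3$, $4$ and $5$. This should be organizational rather than conceptual, given that Proposition~\ref{prop:cohom} already supplies all the needed cohomological inputs.
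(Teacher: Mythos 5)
Part (a) and the quasiphantom statement in (b) are argued exactly as in the paper: the cross-Ext groups are cohomology groups of degree $1$ or $2$ ample bundles in $P$, killed by Proposition~\ref{prop:cohom}, and the vanishing of $HH_{\bullet}(\me{A})$ follows from Kuznetsov's additivity of Hochschild homology together with the Hodge numbers of a fake projective plane. These parts of your proposal are fine.

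The computation of $HH^{t}(\me{A})$, however, has a genuine gap: Kuznetsov's spectral sequence from \cite{kuznetsov-qph} does \emph{not} converge to $HH^{\bullet}(\me{A})$, but to the \emph{normal} Hochschild cohomology $NHH^{\bullet}(\me{B},\me{D}^b(M))$, which sits in a distinguished triangle $NHH^{\bullet}(\me{B},\me{D}^b(M)) \to HH^{\bullet}(\me{D}^b(M)) \to HH^{\bullet}(\me{A})$. Enumerating the $E_1$-terms and summing, as you propose, yields the sequence $0,0,0,0,30,54,27,0,\dots$ for $NHH^{t}$; it cannot by itself produce $HH^0(\me{A})=1$ or the entry $28$ in degree $3$ (there is nothing on the $E_1$-page in total degree $\leq 3$). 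To get the stated answer you need two further inputs that your plan omits. First, $HH^{\bullet}(\me{D}^b(M))$ itself, via HKR: this requires the cohomology of the tangent bundle $T_M$, which is not a line bundle in $P$ and is not covered by Proposition~\ref{prop:cohom} — one uses $H^0(T_M)=0$ (general type), $H^1(T_M)=0$ (rigidity of fake projective planes), and Riemann--Roch to get $h^2(T_M)=8$, plus $h^2(\omega_M^{-1})=h^0(\omega_M^{\otimes 2})=10$, giving $1,0,0,8,10,0,\dots$. Second, the long exact sequence of the triangle only determines the dimensions of $HH^3(\me{A})$ and $HH^4(\me{A})$ once one knows the rank of the map $NHH^4 \to HH^4(\me{D}^b(M))$; the paper proves this map is surjective by reducing to Kuznetsov's computation for $\me{B}=\langle O_X\rangle$ (\cite[Theorem 8.5]{kuznetsov-hh}) and functoriality, and only then does $28 = 8 + (30-10)$, $54$, $27$ come out. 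So your claim that Proposition~\ref{prop:cohom} supplies all needed cohomological inputs and that the rest is bookkeeping is not correct as stated; the triangle, the $T_M$ computation, and the surjectivity argument are essential missing steps. The final remark about products of positive-degree classes being zero is indeed formal once the dimension sequence is established.
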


\begin{proof}
  For ease of notation, we denote $O_{M_K}$ by $L_0$. Then
  $\mr{Hom}(L_j,L_i[k]) = H^k(M_K, L_i \otimes L_j^{\otimes -1})$. If
  $j>i$, then the degree of $ L_i \otimes L_j^{\otimes -1}$ is $1$ or
  $2$ so the cohomology groups vanish for all $k$ by Proposition
  \ref{prop:cohom}. If $i=j$, then $ L_i \otimes L_j^{\otimes -1} \cong
  O_{M_K}$, and since $p_g(M) = q(M) = 0$, (a) follows.

  The first part of (b) follows from the fact that the Betti numbers
  $b_i(M)$ are equal to $1$ for $i=1,2,4$ and $0$ for all other $i$
  together with Kuznetsov's additivity theorem for Hochschild
  homology \cite[Theorem 7.3]{kuznetsov-hh}.

  To compute the Hochschild cohomology of $\me{A}$ we first compute that
  of $\me{D}^b(X)$. Recall, for example from \cite{kuznetsov-hh}, that
  this is given by
  \[
  HH^t(\me{D}^b(X)) = \bigoplus_{p=0}^n H^{t-p}(M, \wedge^p T_M) \ .
  \]
  The only nonzero cohomology group of $O_M$ is in degree $0$ where
  the dimension is $1$. We have $H^0(M,T_M)=0$ since $M$ is of general
  type and $H^1(M,T_M) = 0$ since $M$ is (infinitesimally) rigid. By
  the Hirzebruch-Riemann-Roch theorem we then have
\begin{multline*}
  h^2(M, T_M) = \chi(T_M) = \int_M ch(T_M)\cdot td(T_M) \\
  = \int_M (2+c_1(T_M) + (c_1(T_M)^2 -2c_2(T_M))/2)\cdot (1 +
  c_1(T_M)/2 + (c_1(T_M)^2 +
  c_2(T_M))/12) \\
  = \int_M (c_1(T_M)^2 + c_2(T_M))/6 + c_1(T_M)^2/2 + (c_1(T_M)^2
  -2c_2(T_M))/2 = 8
\end{multline*}
where we use that $c_1(T_M)^2 = 9$ and $c_2(T_M) = 3$.

The only non-zero cohomology group of $\wedge^2T_M = \omega_M^{-1}$ is
in degree $2$ and $h^2(M,\omega_M^{-1}) = h^0(M, \omega_M^2) = 10$ by
Proposition \ref{prop:cohom}. Thus, the dimensions of the vector
spaces $HH^t(\me{D}^b(X))$, for $t \geq 0$, are given by the sequence
$1,0,0,8,10,0,0,\cdots$.

In the article \cite{kuznetsov-qph}, Kuznetsov defines the normal
Hochschild cohomology of $\me{B}$ in $\me{D}^b(X)$, denoted by
$NHH^{\bullet}(\me{B}, \me{D}^b(X))$, for $\me{B}$ any admissible
subcategory of $\me{D}^b(X)$ with $X$ a smooth projective variety, which
sits in a distinguished triangle
\begin{equation} \label{eq:seq}
NHH^{\bullet}(\me{B}, \me{D}^b(X)) \to HH^{\bullet}(\me{D}^b(X)) \to
HH^{\bullet}(\me{A}) \sr{+1}{\lr} \ ,
\end{equation}
where $\me{A} = \me{B}^{\perp}$.  Moreover, if $\me{B}$ is generated
by an exceptional collection $E_1,E_2,\dots,E_n$ then he constructs a
spectral sequence converging to $NHH^{\bullet}(\me{B}, \me{D}^b(X))$
whose $\mf{E}_1^{-p,q}$ term is given by
\[
\bigoplus_{\substack{1 \leq a_0 <a_1 \cdots <a_p \leq n, \\ k_0 + \cdots + k_p =
  q}} Ext^{k_0}(E_{a_0},E_{a_1})\otimes \cdots \otimes
Ext^{k_{p-1}}(E_{a_{p-1}},E_{a_p}) \otimes
Ext^{k_p}(E_{a_p},S^{-1}E_{a_0})
\]
where $S^{-1}$ denotes the inverse of the Serre functor on
$\me{D}^b(X)$. The differentials in this spectral sequence are given by
(higher) multiplication maps, but in our situation it degenerates for
trivial reasons.

We apply the above with $X = M$, so $n=3$ and $(E_1,E_2,E_3) =
(L_0,L_1,L_2)$. The only possibilities for $p$ are $0,1,2$ and we see
from Proposition \ref{prop:cohom} that to get a non-zero summand in
the spectral sequence we must have $k_i = 2$ for $i<p$ and $k_p = 4$
(since $S^{-1}E_{a_0} = E_{a_0}\otimes \omega_M^{-1} [-2]$). We
compute each term in the spectral sequence as follows:
\begin{itemize}
\item $p=0$. We have $Ext^4(E_i, E_i\otimes \omega_M^{-1}[-2]) =
  H^2(M, \omega_M^{-1})$ which has dimension $10$ as we have seen
  above and all other groups are $0$. Thus we must have $q=4$ and
  $\dim(\mf{E}_1^{0,4}) = 3 \times 10 = 30$.
\item $p=1$. Then $\dim(Ext^2(E_i,E_j)) = $ is $3$ if $(i,j) = (1,2)$
  or $(2,3)$ and $6$ if $(1,j) = (1,3)$. On the other hand,
  $\dim(Ext^4(E_j,S^{-1}E_i)) = 6$ if $(i,j) = (1,2)$ or $(2,3)$ and
  $3$ if $(i,j) = (1,3)$ and all other groups are $0$. Thus, we must
  have $q = 2+4 = 6$ and each of the three possibilities for $(i,j)$
  contributes a summand of dimension $3 \cdot 6 =6 \cdot 3 = 18$. It
  follows that $\dim(\mf{E}_1^{-1,6}) = 3 \cdot 18 = 54$.
\item $p=2$. We must have $(a_0,a_1,a_2) = (1,2,3)$, so from
  computations in the previous case we must have $q= 2+ 2 + 4 = 8$ and
  $\dim(\mf{E}_1^{-2,8}) = 3 \cdot 3 \cdot 3 = 27$.
\end{itemize}

Since $\mf{E}_1^{0,4}$, $\mf{E}_1^{-1,6}$ and $\mf{E}_1^{-2,8}$ are
the only non-zero terms in the spectral sequence it follows that it
degenerates at $\mf{E}_1$. Consequently, the dimension of
$NHH^t(\me{B}, \me{D}^b(M))$, for $t \geq 0$, are given by the
sequence $0,0,0,0,30,54,27,0,0,\dots$.

From the computations of $HH^{\bullet}(\me{D}^b(M))$ and
$NHH^{\bullet}(\me{B}, \me{D}^b(M))$ above, it follows from
\eqref{eq:seq} that to compute the dimensions of all $HH^t(\me{A})$ it
suffices to compute the rank of the map from $HH^4(\me{B},
\me{D}^b(M))$ to $HH^4(\me{D}^b(M))$. Since $E_1 = O_M$, it follows
from the case $\me{B} = \langle O_X \rangle $, where $X$ is any smooth
projective variety, considered by Kuznetsov \cite[Theorem
8.5]{kuznetsov-hh}, and the functoriality of restriction maps on
Hochschild cohomology for admissible subcategories, that this map is
surjective.

\end{proof}

We note that the dimensions of the Ext groups in our exceptional
collection satisfy the same duality with respect to those for the
exceptional collection $O_{\P^2}(-2), O_{\P^2}(-1),O_{\P^2}$ on $\P^2$
as discussed by Alexeev and Orlov \cite[p.~757]{alexeev-orlov} in the
case of Burniat surfaces.

As mentioned in the introduction, Bloch's conjecture on zero cycles
is still not known for any fake projective plane. Based on standard
motivic conjectures, we make the following
\begin{conj} \label{conj:van}
  Let $X$ be a smooth projective variety over a field $k$ of
  characteristic zero. If $\me{A} \subset \me{D}^b(X)$ is an
  admissible subcategory with $HH_{\bullet}(\me{A}) = 0$ then
  $K_0(\me{A})$ is a torsion group.
\end{conj}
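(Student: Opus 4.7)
The plan is to reduce Conjecture \ref{conj:van} to two standard conjectural inputs: the expected relationship between admissible subcategories of $\me{D}^b(X)$ and Chow motives with rational coefficients, and the conservativity of the Hodge realization functor on the pseudo-abelian category of rational Chow motives (which is a consequence of Grothendieck's standard conjectures in characteristic zero).

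First I would attach to $\me{A}$ a rational Chow motive $h(\me{A})$. Since $\me{A}$ is admissible, the projection functor $\me{D}^b(X) \to \me{A}$ is of Fourier--Mukai type with kernel $P \in \me{D}^b(X \times X)$; the class $[P] \in \mr{CH}^*(X \times X)_{\Q}$ is then an idempotent under composition of correspondences, and pseudo-abelian completion produces a direct summand $h(\me{A})$ of the Chow motive $h(X)_{\Q}$. Next I would identify $HH_\bullet(\me{A})$ with the Hodge realization of $h(\me{A})$: the Hochschild--Kostant--Rosenberg isomorphism gives $HH_n(X) = \bigoplus_{q-p = n} H^q(X, \Omega^p_X)$, which is precisely the Hodge realization of $h(X)$, and this identification is functorial with respect to Fourier--Mukai kernels. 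Hence the assumption $HH_\bullet(\me{A}) = 0$ forces the Hodge realization of $h(\me{A})$ to vanish.

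Second, I would invoke the standard conjectures: conjectural conservativity of the Hodge realization on rational Chow motives implies $h(\me{A}) = 0$, and hence the vanishing of all rational Chow groups carved out of $\mr{CH}^*(X)_{\Q}$ by the projector $[P]$. The Grothendieck--Riemann--Roch isomorphism $K_0(Y) \otimes \Q \cong \mr{CH}^*(Y)_{\Q}$ for smooth projective $Y$, applied to this direct summand, then yields $K_0(\me{A}) \otimes \Q = 0$, which is equivalent to $K_0(\me{A})$ being a torsion group.

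The main obstacle is that the motivic conjectures invoked above are wide open in general. In the particular case where $X$ is a fake projective plane and $\me{A} = \me{B}^{\perp}$ is the quasiphantom of Theorem \ref{thm:exceptional}, the conclusion specialises precisely to Bloch's conjecture for $X$, which is the principal open question motivating this paper. A secondary, more technical difficulty is to make rigorous the attachment of a genuine rational Chow motive to an admissible subcategory together with the compatibility of Hochschild homology with Hodge realization through the idempotent $[P]$; these compatibilities are folklore in characteristic zero but would need careful foundational work before one could claim an unconditional implication, and working throughout with $\Q$-coefficients (which is harmless since the statement concerns torsion-ness) avoids integrality subtleties in the Chern character.
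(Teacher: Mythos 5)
The statement you are addressing is presented in the paper as a \emph{conjecture}: the author offers no proof, only the remark that it is ``based on standard motivic conjectures'' and the observation that, combined with Theorem \ref{thm:exceptional}, it would imply Bloch's conjecture for the $2$-adically uniformised fake projective planes --- which the paper explicitly states is still open. Your proposal does not prove the statement either, and you say as much: it is a conditional reduction to open motivic conjectures, so it should be read as an articulation of the heuristic behind the conjecture rather than as a proof. Within that reading, one inaccuracy is worth flagging: conservativity of the Hodge (or any Weil cohomology) realization on the category of rational \emph{Chow} motives is not a consequence of Grothendieck's standard conjectures alone. The standard conjectures yield that homological and numerical equivalence coincide and that numerical motives form a semisimple abelian category, hence conservativity at the level of homological/numerical motives; but to conclude that a Chow motive with vanishing realization is itself zero one needs a nilpotence-type input --- Voevodsky's smash-nilpotence conjecture, Kimura--O'Sullivan finite-dimensionality, or a Bloch--Beilinson--Murre filtration --- all of which are open. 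This is exactly the step that, already in the special case of a fake projective plane with the quasiphantom $\mathcal{A}=\mathcal{B}^{\perp}$, amounts to Bloch's conjecture, so no argument of this shape can be unconditional at present.

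By contrast, the part you describe as the ``secondary, more technical difficulty'' is essentially not an obstacle: Kuznetsov has shown that the projection functors of a semiorthogonal decomposition of $\mathcal{D}^b(X)$, $X$ smooth and projective, are of Fourier--Mukai type, and with $\mathbb{Q}$-coefficients the (Todd-twisted) class of the kernel gives an idempotent correspondence, so a rational Chow motive of an admissible subcategory can be defined rigorously, with Hochschild homology computing its Hodge-theoretic realization via HKR and additivity of $HH_{\bullet}$; this is carried out in the literature on (quasi)phantoms, e.g.\ by Gorchinskiy and Orlov. So the genuine gap in your write-up is not foundational bookkeeping but the single conjectural conservativity input, and your attribution of it to the standard conjectures understates what is being assumed.
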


Although the usual motivic conjectures are notoriously intractable, we
hope that the extra structure here might make this more accessible.
If true, together with Theorem \ref{thm:exceptional} it would clearly
imply Bloch's conjecture for the fake projective planes we have
considered.

\begin{rem}
  Because of the existence of $2$-torsion in $P$ for two out of the
  three $2$-adically uniformised fake projective planes, we get more
  exceptional collections in these cases than was conjectured in
  \cite{GKMS}. Moreover, this suggests that for general fake
  projective planes the condition on the existence of cube roots of
  the canonical bundle might be unnecessary.
\end{rem}

\begin{rem}
  Recently Allcock and Kato \cite{allcock-kato} have found a cocompact
  lattice $\Gamma$ in $\mr{PGL}_3(\Q_2)$ containing non-trivial
  torsion such that $\widehat{\Omega}^2/\Gamma$ is still a fake
  projective plane and have suggested that there might be other
  examples as well. Our methods do not immediately apply to their
  example, but the results of \S 2.3 do, so one might expect that a
  more detailed knowledge of the special fibre could lead to the proof
  of the conjecture of Galkin, Katzarkov, Mellit and Shinder in this
  case as well. Of course, for fake projective planes without any
  $p$-adic uniformisation completely new methods would be needed.
\end{rem}
\smallskip

\emph{Acknowledgements.} I thank Gopal Prasad for helpful
correspondence on the classification of fake projective planes and
Patrick Brosnan and Ludmil Katzarkov for their comments on the first
version of this article. I also thank the referee for a very careful
reading of the paper and for his/her helpful comments and corrections.


\def\cprime{$'$}

\end{document}